\documentclass[12pt]{article}
\usepackage{amsmath,amsxtra,amssymb,color,latexsym,amscd,amsthm,subfigure,fancybox}
\usepackage[mathscr]{eucal}
\usepackage{bbm}
\usepackage{graphicx}
\usepackage{enumerate}
\usepackage{enumitem}
\usepackage{epsfig} 
\usepackage{epstopdf}
\usepackage{diagbox, eqparbox, hhline}
\usepackage{cases}
\usepackage{float}
\setlength{\oddsidemargin}{-0.08in}
\setlength{\evensidemargin}{-0.08in}
\setlength{\textheight}{9.0in}
\setlength{\textwidth}{6.5in}
\setlength{\topmargin}{-0.5in}

%


\newtheorem{thm}{Theorem}[section]

\newtheorem{lm}{Lemma}[section]

\theoremstyle{definition}

\theoremstyle{remark}

\numberwithin{equation}{section}


\newcommand{\eps}{\varepsilon}

\newcommand{\M}{\mathcal{M}}

\newcommand{\E}{\mathbb{E}}

\newcommand{\N}{{\mathbb{Z}}_+}

\newcommand{\PP}{\mathbb{P}}

\newcommand{\R}{\mathbb{R}}

\numberwithin{equation}{section}


\newcommand{\Lom}{{\mathcal L}}

\newcommand{\bed}{\begin{displaymath}}
\newcommand{\eed}{\end{displaymath}}
\newcommand{\bea}{\bed\begin{array}{rl}}
\newcommand{\eea}{\end{array}\eed}

\newcommand{\barray}{\begin{array}{ll}}
\newcommand{\earray}{\end{array}}

\def\bar{\overline}
\def\hat{\widehat}
\def\a.s{\text{\;a.s.\;}}

\def\bdelta{\boldsymbol{\delta}}
\begin{document}
\title{Extinction and permanence in a stochastic SIRS model  in regime-switching with general incidence rate\thanks{This
research was supported in part by the Vietnam National Foundation for Science and Technology Development   (NAFOSTED)
n$_0$  101.03-2017.23.}}
\date{}
\author{T. D. Tuong\thanks{Faculty of Basic Sciences, Ho Chi Minh University of Transport, 2 D3, Ho Chi Minh city, Vietnam; VNU, Hanoi-University of Science,
trandinhtuong@gmail.com.}\and
Dang H.  Nguyen\thanks{Department of Mathematics, Wayne State University, Detroit, MI
48202, United States,
dangnh.maths@gmail.com.} \and
N.T. Dieu\thanks{Department of Mathematics, Vinh University,
182 Le Duan, Vinh, Nghe An, Vietnam, dieunguyen2008@gmail.com.}\and
Ky Tran\thanks{Department of Mathematics, College of Education, Hue University, 34 Le Loi street, Hue city, Vietnam, quankysp@gmail.com.}}
\maketitle
\begin{abstract}
In this paper, we consider a stochastic SIRS model with general incidence rate and perturbed by both white noise and color noise.  We determine the threshold $\lambda$ that is used
to classify the extinction and permanence of the disease.  In particular,   $\lambda<0$ implies that the disease-free $(K, 0, 0)$ is globally asymptotic stable, i.e., the disease will eventually disappear. If $\lambda>0$ the epidemic is strongly stochastically permanent.
Our result is considered as a significant generalization and improvement over the results  in \cite{HZ1, GLM1, LOK1, SLJJ1, ZJ1}.
\end{abstract}

\section{Introduction}\label{sec:int}
In recent years, mathematical models have been used increasingly to support public
health policy making in the field of infectious disease control. The first roots
of mathematical modeling date back to the eighteenth century, when Bernoulli \cite{Ber}
used mathematical methods to estimate the impact of smallpox vaccination on life
expectancy. However, a rigorous mathematical framework was first worked out by Kermack and Mckendrick \cite{KM, KM1}. Their model, nowadays best known as the SIR model. This model classifies
individuals as one of susceptible, infectious and removed with permanent acquired
immunity. In fact, some removed individuals lose immunity and return to the susceptible
compartment. This case can be modeled by SIRS (Susceptible-Infected-Romoved-Susceptible) epidemic model, which  studied by
many scholars \cite[...]{HZ1, GLM1,  KM1, KW1,LOK1,  Lu,  SLJJ1, ZJ1}; see also \cite{Z.1, Z.2, Z.3} for related works. In fact,  the disease transmission process is unknown in detail. However, several authors proposed
different forms of incidences rate in order to model the disease transmission process. In  \cite{KW1}, authors studied deterministic SIRS model  with the standard bilinear incidence rate and has been extended to stochastic SIRS model in \cite{CLL1, HZ1, LS1, SLJJ1}. However, there is a variety of reasons why this standard bilinear incidence rate
may require modifications. In \cite{CS1}, Capasso and Serio studied the cholera epidemic spread in Bari in 1978. They imposed the saturated incidence rate $\frac{\beta SI}{1+aI}$ in their model of the cholera, where $a$ is positive constant. Anderson et. al. \cite{AM1} used saturated incidence rate $\frac{\beta SI}{1+aS}.$ In \cite{NT1}, authors considered the Beddington-DeAngelis functional response $\frac{\beta SI}{1+a_1S+a_2I}.$ 
Recently, there are many works on epidemic models perturbed by both white and colored noises, for example [10, 20, 25] and our work can be seen as a further step.
	The consideration of colored noise is
	motivated by 	
	the fact that biological parameters of systems often demonstrate abrupt changes which have important effects
	on the dynamics of  the system (see [26]); that is, there might be sudden instantaneous transitions between two or more sets of parameter values in
	the underlying model corresponding to two or more different environments or regimes such as changes between dry or raining times, types of pathogens, number of mediators.  The switching is often assumed memoryless and the waiting time for the next
	switch has an exponential distribution. We can hence model the regime switching by a finite-state
	Markov chain.  For instance,   Anderson [27] discusses the optimal
	exploitation strategies for an animal population in a Markovian environment. Additionally Peccoud and Ycart [28] propose
	a Markovian model for the gene induction process, and Caswell and Cohen [29] discuss the effects of the spectra of the
	environmental variation in the coexistence of metapopulations using a two-state Markov chain. 

In this paper, we work with the general incidence rate $SIF_1(S, I),$ where $F_1$ is locally Lipschitz continuous. Thus, our model includes 
incidence rates appeared above. 
  Furthermore, we suppose that the model is perturbed by both white nose and color noise. 
  To be specific, we consider the following model  
\begin{equation}\label{e1.1b}
\begin{cases}
dS(t)=\big(-S(t)I(t)F_1(S(t), I(t),\xi_t)+\mu(\xi_t) (K-S(t))+\gamma_1(\xi_t)R(t)\big)dt\\
\hspace{8.5cm}-S(t)I(t)F_2(S(t), I(t),  \xi_t)dB(t)  \\
dI(t)=\big(S(t)I(t)F_1(S(t), I(t),\xi_t)-(\mu(\xi_t) +\rho(\xi_t)+ \gamma_2(\xi_t)) I(t))dt\\ \hspace{8.5cm}+S(t)I(t)F_2( S(t), I(t),\xi_t)dB(t)  \\
dR(t)=(\gamma_2(\xi_t) I(t) - (\mu(\xi_t)+\gamma_1(\xi_t)) R(t))dt,
\end{cases}
\end{equation}
where $\{\xi_t, t\geq 0\}$ is a right continuous Markov chain taking values in $\mathcal M=\{1,2,\dots,m_0\}$,
$F_1(\cdot), F_2(\cdot)$ are positive and locally  Lipchitz functions on $[0,\infty)^{2}\times\M$,  $B(t)$ is a one dimensional Brownian motion, all  parameters $K$, $\mu(i)$, $\rho(i)$, $\gamma_1(i)$, $\gamma_2(i)$ are assumed to be positive for all $i\in\mathcal M$. $K$ is  a carrying capacity, $\mu(i)$, $\rho(i)$, $\gamma_1(i)$, $\gamma_2(i)$ 
are the per capita disease-free death rate, the excess per capita natural death rate of  infective class,    the per capita lose immunity and return to the sucesstible class of  infective class,  and the per capita recovery rate of the infective individuals respectively in the $ith$ regime.

Our main goal in this paper is to  provide a sufficient and almost necessary condition for  strongly stochastically permanent and extinction of the disease in the stochastic SIRS model \eqref{e1.1b}. Concretely, we establish a threshold $\lambda$ such that the sign of $\lambda$ determines the asymptotic behavior of the system.
 If $\lambda < 0,$ the disease is eradicated at a disease-free equilibrium $(K, 0, 0)$. In this case, we derive that the density of disease converges to $0$ with exponential rate.  Meanwhile, in the case $\lambda > 0$, by using techniques in \cite{BL1} we show that the disease is strongly stochastically permanent.
 Compared to existing results in  
 \cite{HZ1, GLM1, LOK1, SLJJ1, ZJ1}, our findings are significant improvements as we will show in Section \ref{sec:con}. 
 We emphasize that the main method in the aforementioned papers is based on the construction of Lyapunov functions. Meanwhile, we adopt a new approach motivated by the works \cite{Dieu, Gray} on SIS and SIR models. 
 Therefore, this work paves an effective way for treating generalized SIRS models in random environments. 
 
\par
 The rest of the paper is arranged as follows. In Section \ref{sec:thr}, we derive a threshold that is used
to classify the extinction and strongly stochastically permanent of the disease. A large part of this section is devoted to the proof of Theorem \ref{main}.
A discussion and comparison to
 existing results in the literature together with several numerical experiments are presented in Section \ref{sec:con}. The paper is concluded with some additional remarks.

\section{Sufficient and almost necessary conditions for permanence}\label{sec:thr}
Let  $B(t)$ be an one-dimensional
 Brownian motion defined on a complete probability space $(\Omega,\mathcal{F},\mathbb{P} )$.  Denote by $Q=({q_{kl}})_{m_0\times m_0}$  the generator of the Markov chain $\{\xi_t,t\geq 0\}$  taking values in $\mathcal M=\{1,2,\dots,m_0\}$ for a positive integer $m_0$.
This means that
\begin{equation*}
\mathbb P\{\xi_{t+\delta}=l|\xi_t=k\}=
\begin{cases}
q_{kl}\delta + o(\delta)& \text{if   } k\ne l,\\
1+q_{kk}\delta+ o(\delta) &\text{if   } k=l,
\end{cases}
\end{equation*}
{as } $\delta\to 0$. Here, $q_{kl}$ is the transition rate from $k$ to $l$ and $q_{kl}\geq 0$ if $k\ne l$, while
$q_{kk}=-\sum_{k\ne l}q_{kl}.$
We assume that the Markov chain $\xi_t$ is irreducible, which means
that the system will switch from any regime to any other regimes.
Under this condition, the Markov chain $\xi_t$ has a unique stationary distribution $\pi=(\pi_1,\pi_2,\ldots,\pi_{m_0})\in\mathbb{R}^{m_0}$.

We assume that the Markov chain $\xi_t$ is independent of the Brownian motion $B(t)$. Denote $\mathbb{R}^{3}_+:=\{(x,y,z): x\geq 0, y\geq 0, z\geq 0\}$ and $\Delta:=\{(x,y,z)\in\R^3_+: x+y+z\leq K\}.$
The interior $\{(x,y,z): x> 0, y> 0, z> 0\}$ of $\mathbb{R}^{3}_+$ is denoted by $\mathbb{R}^{3, o}_+$. Throughout of this paper, we suppose that $F_j(x, y, i)>0$ for all $(x, y,z, i)\in \Delta\times \mathcal{M}, \; j=\overline{1,2}.$
\begin{thm}
For any given initial value $(S(0), I(0), R(0))\in\mathbb{R}^{3}_+$,  there exists a unique global solution ${ \{(S(t), I(t), R(t)), t\geq0\}}$ of Equation \eqref{e1.1b} and the solution will remain in  $\mathbb{R}^{3}_+$ with probability one. Moreover, if $I(0)>0$ then $I(t)>0$ for any $t\geq 0$ with probability one.

\end{thm}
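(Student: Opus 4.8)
The plan is to follow the classical scheme for SDEs with Markovian switching: first secure a unique maximal local solution from the regularity of the coefficients, then rule out explosion and exit from $\mathbb{R}^{3}_+$ by exhibiting explicit pathwise representations of the three components. Since $F_1, F_2$ are locally Lipschitz on $[0,\infty)^{2}\times\M$ and $\mu,\rho,\gamma_1,\gamma_2$ are constant on each regime, the drift and diffusion coefficients of \eqref{e1.1b} are locally Lipschitz in $(S,I,R)$ for each fixed value of $\xi_t$. Standard theory (e.g. Mao--Yuan) then yields, for every initial datum in $\mathbb{R}^{3}_+$, a unique solution defined up to an explosion time $\tau_e$, so the whole task reduces to showing that on $[0,\tau_e)$ the solution stays in $\mathbb{R}^{3}_+$ and that $\tau_e=\infty$ almost surely.

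For the invariance I would treat the three coordinates in the order $I,R,S$, exploiting that each equation is linear in its own variable. The $I$-equation reads $dI = I[\,SF_1-(\mu+\rho+\gamma_2)\,]dt + I\,SF_2\,dB$, so It\^o's formula applied to $\ln I$ gives the geometric representation
\[
I(t) = I(0)\exp\Big(\int_0^t \big[SF_1-(\mu+\rho+\gamma_2)-\tfrac12 S^2F_2^2\big]\,ds + \int_0^t SF_2\,dB\Big),
\]
which is finite on $[0,\tau_e)$ and shows at once that $I(t)\ge 0$, and that $I(t)>0$ for all $t$ exactly when $I(0)>0$. Feeding $I\ge 0$ into the $R$-equation and using the integrating factor $\exp(\int_0^t(\mu+\gamma_1)\,ds)$ gives $R(t)\ge 0$. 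The hard part is $S$, because of the sign of the terms $-SIF_1\,dt$ and $-SIF_2\,dB$; here I would write the $S$-equation as a linear non-homogeneous SDE $dS = S\big[(-IF_1-\mu)dt - IF_2\,dB\big] + (\mu K+\gamma_1 R)\,dt$ and apply the stochastic variation-of-constants formula with fundamental solution $\Phi(t)=\exp(\int_0^t(-IF_1-\mu-\tfrac12 I^2F_2^2)\,ds - \int_0^t IF_2\,dB)>0$, yielding
\[
S(t) = \Phi(t)\Big[S(0) + \int_0^t \Phi(s)^{-1}\big(\mu K+\gamma_1 R(s)\big)\,ds\Big].
\]
Since $\Phi>0$, $S(0)\ge 0$, $R\ge 0$ and $\mu K>0$, this is non-negative (indeed strictly positive for $t>0$), so the solution never leaves $\mathbb{R}^{3}_+$ on $[0,\tau_e)$. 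The representation is legitimate because, once the local solution exists, $I(\cdot),R(\cdot),\xi_\cdot$ are known adapted locally bounded processes, so $S$ is the unique solution of a genuine linear SDE with those coefficients.

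Finally I would rule out explosion using the total mass $N=S+I+R$. Adding the three equations makes the diffusion terms $\pm SIF_2\,dB$ cancel, leaving the noise-free identity $dN = [\mu(\xi_t)(K-N)-\rho(\xi_t)I]\,dt$. Because $I\ge 0$ we get $dN/dt \le \mu(\xi_t)(K-N)$, and a comparison argument gives $N(t)\le \max\{N(0),K\}$ on $[0,\tau_e)$. Together with the non-negativity just established, this confines every coordinate to a fixed compact set, so the solution cannot blow up in finite time; hence $\tau_e=\infty$ almost surely and the solution is global, while the positivity statement for $I$ is already contained in its explicit formula. (Should one prefer a single unified argument, the Lyapunov/stopping-time method also works: define $\tau_n$ as the first exit from $(1/n,n)^3$, apply Dynkin to a function such as $V=(S+I+R)-\ln S-\ln I-\ln R$, and bound $\E V(X(\tau_n\wedge T))$ to force $\PP(\tau_n\le T)\to 0$; but for boundary initial data the explicit representations are cleaner, since this logarithmic Lyapunov function is not finite when some component starts at $0$.) The only genuine obstacle is the positivity of $S$, and it is dispatched by the variation-of-constants representation above.
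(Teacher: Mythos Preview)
Your argument is correct. The paper does not actually prove this theorem; it simply cites \cite{HZ1}, where the standard Lyapunov/stopping-time method is used---exactly the alternative you sketch in your parenthetical remark with $V=(S+I+R)-\ln S-\ln I-\ln R$ and the exit times $\tau_n$. So your primary route via explicit pathwise representations (geometric SDE for $I$, integrating factor for $R$, stochastic variation-of-constants for $S$, then total mass for non-explosion) is a genuinely different and arguably cleaner approach. Its main advantage, which you identify yourself, is that it handles boundary initial data uniformly: the logarithmic Lyapunov function of the cited approach is singular when a component starts at $0$, so one has to patch separately, whereas your formulas give $I\equiv0$ when $I(0)=0$ and $S(t)>0$ for $t>0$ directly. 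Conversely, the Lyapunov method is a one-shot argument that does positivity and non-explosion simultaneously and generalizes more mechanically to systems where nice closed-form representations are unavailable.

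One small technical point worth making explicit: $F_1,F_2$ are only assumed locally Lipschitz on $[0,\infty)^2\times\M$, so to invoke the local existence theorem at an initial point on the boundary of $\R^3_+$ you should first extend $F_1,F_2$ locally Lipschitz to a full neighborhood in $\R^2$ (e.g.\ by $\tilde F_j(x,y,i)=F_j(x_+,y_+,i)$), run your positivity argument for the extended system, and then observe that the solution never leaves $\R^3_+$ and hence solves the original system as well. This also removes the apparent circularity in your $S$-step, where the ``known'' coefficients $IF_1,IF_2$ depend on $S$ through $F_1,F_2$: once the extended local solution exists, these are legitimate adapted locally bounded processes regardless of the sign of $S$, and your variation-of-constants identity then forces $S\ge0$ after the fact.
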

\begin{proof}
The proof is almost the same as those in  \cite{HZ1}.
\end{proof}
By adding side by side in system \eqref{e1.1b},  we have
\begin{align*}
  \frac d{dt}(S(t)+I(t)+R(t))
&=K\mu(\xi_t)-\mu(\xi_t)(S(t)+I(t)+R(t))-\rho(\xi_t) I(t)
\\& \leq  \mu(\xi_t)K-\mu(\xi_t)(S(t)+I(t)+R(t)) .
  \end{align*}
  In view of the comparison theorem, if $S(0)+I(0)+R(0)\leq K$, so is $(S(t)+I(t)+R(t))$ for $t\geq0$. Thus, $\Delta=\{(x,y,z)\in\R^3_+: x+y+z\leq K\}$ is an invariant set.
  If $S(0)+I(0)+R(0)\leq K$, 
  then $\limsup_{t\to\infty}(S(t)+I(t)+R(t))\leq K$. 
Therefore, we only need to work with the process $(S(t),I(t), R(t))$ on the invariant set $\Delta$.
To simplify notations,
we denote by $\Phi(t)=(S(t), I(t), R(t))$  the solution of system \eqref{e1.1b},  and $\phi=(x, y, z)\in \Delta$.
We are now in a position to  provide a condition for the extinction and permanence of disease.
 Let $$g(x,y,i)=F_1(x,y,i)x-\Big(\mu(i)+\rho(i)+\gamma_2(i)+\frac{F_2^2(x, y, i)x^2}{2}\Big).$$ 
We define the threshold \begin{equation}\label{lambda} \lambda =\sum_{i=1}^{m_0}g(K,0,i)\pi_i= \sum_{i=1}^{m_0} \Big[F_1(K, 0, i) K-\Big(\mu(i)+\rho(i)+\gamma_2(i)+\frac{F_2^2(K, 0, i)K^2}{2}\Big)\Big]\pi_i. \end{equation} 
 Let $C^{2}(\R^3\times\mathcal M, \R_+)$  denote the family of all non-negative functions $V(\phi, i)$
on $\R^3\times\mathcal M $ which are twice continuously differentiable in $\phi$. The operator $\Lom$ associated with \eqref{e1.1b} is defined as follows. 
For $V\in C^{2}(\R^3\times\mathcal M, \R_+)$, define
\begin{equation}
\Lom V(\phi,i)=\Lom_i V(\phi, i)+\sum_{j\in\M} q_{ij}V(\phi, j),
\end{equation}
where $$
\Lom_i V(\phi, i)=V_\phi(\phi, i)\widetilde f(\phi, i)+\frac12\widetilde g^\top(\phi, i) V_{\phi\phi}(\phi, i)\widetilde g(\phi, i),
$$ 
$V_\phi(\phi, i) $ and $V_{\phi\phi}(\phi, i)$ are the gradient and Hessian of $V(\cdot,i)$,
$\widetilde f$ and $\widetilde g$ are the drift and diffusion coefficients of \eqref{e1.1b}, respectively; i.e.,
\begin{equation}\label{2.3}
\begin{aligned}
\widetilde f(\phi,i)=&(-xyF_1(x,y,i)+\mu(i)(K-x)+\gamma_1(i)z, xyF_1(x,y,i)\\
&\qquad -(\mu(i)+\rho(i)+\gamma_2(i))y, \gamma_2(i)y-(\mu(i)+\gamma_1(i))z)^\top,
\end{aligned}
\end{equation}
 and $$\widetilde g(\phi,i)=(-xyF_2(x,y,i), xyF_2(x,y,i),0)^\top.$$ 
Note that if $V(\phi,i)$ does not depend on $i$, then $\Lom_iV$ and $\Lom V$ coincide because $\sum_{j\in\M} q_{ij}=0$.

Our main result is given below.
\begin{thm}\label{main}
 If $\lambda<0$, then $\Phi(t)\to (K,0, 0)$ a.s. as $t\to\infty$
for all given initial value $(\phi, i)\in \Delta\times \mathcal M$, i.e., the disease will be extinct. Moreover,
\begin{equation}\label{log}
\PP_{\phi,i}\Big\{\lim\limits_{t\to\infty}\dfrac{\ln I(t)}t=\lambda<0\Big\}=1\text{ for }(\phi,i)\in\Delta\times\M, y>0.
\end{equation}
 If $\lambda>0$,  the disease is strongly stochastically permanent
in the sense that for any $\eps>0$, there exists a $\delta>0$ such that
\begin{equation}\label{per}
\liminf_{t\to\infty} \PP_{\phi,i}\{I(t)\geq\delta\}>1-\eps\,\text{ for any }\, (\phi,i)\in\Delta\times\M, y>0.
\end{equation}
\end{thm}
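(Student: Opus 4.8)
The starting point for both cases is the logarithmic transform of the infective component. Applying Itô's formula to $\ln I(t)$ and reading the coefficients off \eqref{e1.1b}, the $dt$–coefficient of $d\ln I$ is exactly $SF_1(S,I,\xi)-(\mu+\rho+\gamma_2)-\tfrac12 S^2F_2^2(S,I,\xi)=g(S,I,\xi)$, so that
\begin{equation*}
\frac{\ln I(t)}{t}=\frac{\ln I(0)}{t}+\frac1t\int_0^t g(S(s),I(s),\xi_s)\,ds+\frac1t\int_0^t S(s)F_2(S(s),I(s),\xi_s)\,dB(s).
\end{equation*}
On the invariant compact set $\Delta$ the integrand $SF_2$ is bounded, so the quadratic variation of the martingale term grows at most linearly and the strong law of large numbers for local martingales sends it to $0$ almost surely. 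Both assertions therefore reduce to controlling the time average $\frac1t\int_0^t g(S(s),I(s),\xi_s)\,ds$.

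\textbf{Extinction ($\lambda<0$).} Since each $g(\cdot,\cdot,i)$ is Lipschitz on the compact set $\Delta$ and $S\le K$ there, one has $|g(S,I,\xi_s)-g(K,0,\xi_s)|\le L\big((K-S(s))+I(s)\big)$. Integrating the $S$–, $R$– and total–population equations and dividing by $t$ (each boundary term and martingale term vanishing in the limit) shows that the Cesàro averages of $K-S$, of $R$ and of $I$ are mutually dominated by a constant multiple of $\frac1t\int_0^t I(s)\,ds$. Combined with the ergodic theorem for the chain, $\frac1t\int_0^t g(K,0,\xi_s)\,ds\to\sum_i g(K,0,i)\pi_i=\lambda$, this gives $\frac1t\int_0^t g(S,I,\xi_s)\,ds\to\lambda$, hence $\frac{\ln I(t)}t\to\lambda<0$ and \eqref{log} — \emph{provided} one first establishes $\frac1t\int_0^t I(s)\,ds\to0$, equivalently $I(t)\to0$ a.s. This is the main obstacle: the crude Lipschitz bound carries a constant that is not small, so it cannot by itself close the loop in $\frac{\ln I(t)}t\le\lambda+\frac{C}t\int_0^t I\,ds+o(1)$. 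I would instead exploit that the disease–free subsystem $dS=(\mu(K-S)+\gamma_1 R)\,dt,\ dR=-(\mu+\gamma_1)R\,dt$ is globally attracted to $(K,0)$ uniformly in the regime: whenever $I$ stays small over a long window, $(S,R)$ is driven close to $(K,0)$, the drift of $\ln I$ becomes $\approx\lambda<0$, and a comparison/supermartingale argument over successive windows forces $I\to0$. Once $I\to0$, the relations above yield $R\to0$ and $S\to K$, i.e. $\Phi(t)\to(K,0,0)$.

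\textbf{Permanence ($\lambda>0$).} Because the instantaneous growth rate $g(K,0,\xi_t)$ need not be positive in every regime, I would first remove its fluctuations by solving the Poisson equation $\sum_{j\in\M}q_{ij}\psi(j)=\lambda-g(K,0,i)$ for $\psi:\M\to\R$, solvable since its right–hand side has zero $\pi$–mean and $\xi_t$ is irreducible. For small $p>0$, taking $U(\phi,i)=y^{-p}e^{-p\psi(i)}$ and using $\sum_{j}q_{ij}=0$ gives
\begin{equation*}
\frac{\Lom U(\phi,i)}{U(\phi,i)}=-p\Big(g(x,y,i)+\sum_{j\in\M}q_{ij}\psi(j)\Big)+O(p^2),
\end{equation*}
and at $(x,y)=(K,0)$ the Poisson equation collapses the bracket to $\lambda$, so the first–order part is $-p\lambda<0$; for $p$ small this makes $\Lom U<0$ in a neighbourhood of $(K,0,0)$, which controls $\E_{\phi,i}I(t)^{-p}$ and, via $\PP_{\phi,i}\{I(t)<\delta\}\le\delta^{p}\E_{\phi,i}I(t)^{-p}$, would give \eqref{per}. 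The genuine difficulty here is that this pointwise drift estimate only holds near $(K,0,0)$: the extinction set $\{y=0\}\cap\Delta$ is larger, and at a point $(x,0)$ with $x\neq K$ the bracket equals $g(x,0,i)-g(K,0,i)+\lambda$, whose sign is not controlled. Establishing the drift on all of $\{y=0\}$ rather than at the equilibrium alone is precisely what the persistence machinery of \cite{BL1} supplies, by working with the unique invariant measure $\delta_{(K,0,0)}\otimes\pi$ of the boundary process and its transverse exponent $\int g\,d(\delta_{(K,0,0)}\otimes\pi)=\lambda>0$. I expect verifying the hypotheses of that machinery — Feller continuity, identification of $(K,0,0)$ as the boundary attractor, and the accessibility/irreducibility needed for uniqueness of the boundary invariant measure — to be the bulk of the remaining work.
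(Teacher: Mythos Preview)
Your diagnosis of the obstacles is accurate in both halves, but in neither half do you actually close the gap you identify.

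\textbf{Extinction.} You correctly isolate the circularity: the Lipschitz estimate gives only $\limsup t^{-1}\ln I(t)\le\lambda+C\limsup t^{-1}\int_0^t I\,ds$, which cannot be bootstrapped without an a~priori bound on the Ces\`aro mean of $I$. The ``window/supermartingale'' fix you sketch is not carried out, and making it rigorous is delicate (one must exclude oscillatory behaviour in which $I$ repeatedly visits $0$, lets $(S,R)$ relax, then escapes). The paper sidesteps the circularity via a two-stage Lyapunov argument: first, $V=(K-x)^2+y^p+z^2$ with $p$ small satisfies $\Lom_i V\le p(g(K,0,i)+\sigma)V$ on a neighbourhood $\mathcal U_{\delta_1}$ of $(K,0,0)$, which by a regime-switching stability theorem gives local asymptotic stability in probability; second, a separate function $U=c_1-(x+1)^{c_2}$ with $c_2$ large yields $\E_{\phi,i}\tau_\delta<\infty$ for the hitting time of $\mathcal U_\delta$. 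The strong Markov property then upgrades local to global convergence, and only \emph{after} $\Phi(t)\to(K,0,0)$ a.s.\ is established does the It\^o identity you wrote give the rate~\eqref{log}.

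\textbf{Permanence.} Your Poisson-equation Lyapunov function $U=y^{-p}e^{-p\psi(i)}$ indeed gives $\Lom U<0$ only near $(K,0,0)$, and you are right that this is insufficient because the extinction boundary $\{y=0\}\cap\Delta$ is larger. The paper's resolution --- the concrete content of the \cite{BL1} technique here --- is to replace the infinitesimal generator by the time-$T$ map. On $\{y=0\}$ the pair $(S,R)$ converges to $(K,0)$ \emph{uniformly} in the initial point, so for $T$ large $\E_{\phi,i}\int_0^T g(\Phi,\xi)\,dt\ge\tfrac{3\lambda}{4}T$ holds for \emph{every} boundary initial condition; the Feller property then extends this (with constant $\tfrac{\lambda}{2}T$) to $\{0<y<\delta_2\}$. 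Writing $G(T)=\ln I(0)-\ln I(T)$, a uniform bound on $\E e^{\pm G(T)}$ together with a second-order Taylor estimate of the log-Laplace transform yields $\E_{\phi,i}I(T)^{-\theta}\le q\,y^{-\theta}$ with $q=e^{-\lambda\theta T/4}<1$ for small $y$ and small $\theta$. This one-step contraction iterates via the Markov property to give $\limsup_t\E I(t)^{-\theta}<\infty$, hence~\eqref{per} by Markov's inequality. Passing to a fixed large $T$ is precisely what absorbs both the regime fluctuations \emph{and} the transient boundary dynamics away from $(K,0,0)$ that your pointwise generator calculation cannot handle; invoking ``the BL1 machinery'' without this step leaves the main idea unsupplied.
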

\begin{proof}[Proof of Theorem \ref{main}
	: Case $\lambda<0$.]
Since $\lambda<0$, we can choose sufficiently small $\sigma>0$ such that
$$\sum_{j\in\M}(g(K,0,j)+\sigma)\pi_j<0.$$
Consider the Lyapunov function $V(x,y,z, i)=(K- x)^2+y^p+z^2$, where $p\in (0, 1)$ is a constant to be specified. By direct calculation we have
for $(x,y,z,i)\in\Delta\times\M$ that
 \begin{align*} \mathcal{L}_iV(x,y, z, i)=&-2(K-x)[-F_1(x, y, i) xy+\mu(i)(K- x)+\gamma_1(i)z] +py^{p}g(x,y,i)
\\&+2z(\gamma_2(i)y-(\mu(i)+\gamma_1(i)) z) +x^2y^2F_2^2(x, y, i) +\frac{p^2F^2_2(x, y, i)x^2y^p}{2}
\\\leq& -2\mu(i)(K-x)^2 -2(\mu(i)+\gamma_1(i))z^2+py^{p}g(x,y,i)
 \\&
 + y\Big(2(K-x)F_1(x, y, i) x+2z\gamma_2(i)+x^2yF_2^2(x, y, i)\Big)+\frac{p^2F^2_2(x, y, i)x^2y^p}{2}.
 \end{align*}

For a constant $\delta_1\in (0, K)$, we denote $\mathcal{U}_{\delta_1}=(K-\delta_1, K]\times[0, \delta_1)^2.$
  Because of the continuity of $g(\cdot), F_1(\cdot), F_2(\cdot)$, the compactness of $\Delta\times\M$ and the fact that $y^{1-p}\to0$ as $y\to 0$, we can choose $p\in (0,1)$ and $\delta_1 \in (0, K)$ such that for any $(x, y, z, i)\in \mathcal{U}_{\delta_1}\times\mathcal{M}$,
  \begin{align*} py^pg(x,y,i)+y\Big(2(K-x)F_1(x, y, i) x&+2z\gamma_2(i)+x^2yF_2^2(x, y, i)\Big)+\frac{p^2F^2_2(x, y, i)x^2y^p}{2}
\\& \leq p(g(K,0,i)+\sigma)y^p.
\end{align*}
When $p$ is sufficiently small, we also have $$-2\mu(i)(K-x)^2 -2(\mu(i)+\gamma_1(i))z^2\leq p(g(K, 0, i)+\sigma)[(K-x)^2+z^2].$$
  Therefore, $$\Lom_iV(x, y, z, i)\leq p[g(K, 0, i)+\sigma]V(x, y, z, i) \,\text{ for }\, (x, y, z, i)\in \mathcal{U}_{\delta_1}\times \mathcal{M}.$$
  By \cite[Theorem 3.4]{DY1} (see also \cite[Definition 3.1]{KZY} and \cite[Theorem 4.3]{KZY}), for any $\varepsilon>0,$ there is $0<\delta<\delta_1$ such that
\begin{equation}\label{e2.19s}
\PP_{\phi,i}\Big\{\lim_{t\to\infty}(S(t), I(t), R(t))=\big(K,0, 0\big)\Big\}\geq 1-\varepsilon \, \text{ for }\, (\phi,i)\in \mathcal{U}_{\delta}\times\mathcal{M}.
 \end{equation}
Now we show that any solution starting in $\Delta\times\M$
will eventually enter $\mathcal{U}_{\delta}\times\mathcal{M}$.
Let $\tau_\delta=\inf\{t\geq 0: S(t)\geq K-\delta\}$. Consider the Lyapunov function 
$U(\phi, i)=c_1-(x+1)^{c_2},$
where $c_1$ and $c_2$ are two positive constants to be specified.
We have
$$
\mathcal{L}U(\phi, i)=-c_2(x+1)^{c_2-2}[(x+1)(\mu(i)(K-x)+\gamma_1(i)z-xyF_1(x, y, i))
+\frac{c_2-1}2x^2y^2F_2^2(x, y, i)].
$$

Let $\mu_m=\min\{\mu(i):i\in\M\}$.
Since
$(x+1)\mu(i)(K-x)\geq\delta\mu_m$ for any $x\in[0,K-\delta]$
and $\inf\{F_2(x,y,i): (x,y,z,i)\in\Delta\times\M\}>0$,
we can find a sufficiently large number $c_2$ such that
\begin{equation}
(x+1)\mu(i)(K-x)+\gamma_1(i)z-xyF_1(x, y, i))
+\frac{c_2-1}2x^2y^2F_2(x, y, i)\geq 0.5\delta\mu_m 
\end{equation}
for $(\phi,i)\in\Delta\times\M, x\leq K-\delta$. Then
$$\mathcal{L}U(\phi, i)\leq-0.5c_2\delta\mu_m\;\;\text{given that }\;(x,y,z,i)\in\Delta\times\M, x\leq K-\delta.$$
By Dynkin's formula, we obtain
$$\E_{\phi,i} U(\Phi(\tau_\delta\wedge t), r_{\tau_\delta\wedge t})= U(\phi,i)+\E_{\phi,i}\int_0^{\tau_\delta\wedge t}\Lom U(\Phi(s),r_s)ds\leq U(\phi, i)-0.5 c_2\mu_m\delta\E_{\phi,i}\tau_{\delta}\wedge t.$$
Letting $t\to\infty$ and using Fatou's lemma yields that
$$\E_{\phi,i} U(\Phi(\tau_\delta), \xi_{\tau_\delta})\leq U(\phi, i)-0.5\delta\mu_m c_2\E_{\phi,i}\tau_{\delta}.$$
Since $U$ is bounded above on $\R^3_+$, we deduce that
$\E_{\phi,i}\tau_{\delta}<\infty$.
By the strong Markov property, we have from \eqref{e2.19s} and $\E_{\phi,i}\tau_{\delta}<\infty$ that
$$\PP_{\phi,i}\{\lim_{t\to\infty}\Phi(t)=(K,0,0)\}\ge (1-\eps)\,\text{ for }\, (\phi, i)\in \Delta\times\M,$$
for any $\eps>0$. As a result,
\begin{equation}\label{e2.5dl1}
\PP_{\phi,i}\{\lim_{t\to\infty}\Phi(t)=(K,0,0)\}=1\,\text{ for }\, (\phi, i)\in \Delta\times\M.
\end{equation}
By It\^o's formula
we have
$$\ln I(t)=\ln I(0)- G(t),$$
where
$$G(t)=-\int_0^tg(\Phi(u),\xi_u)du-\int_0^tS(u)I(u)F_2(S(u), I(u),  \xi_u)dB(u).
$$This imlies that
\begin{equation}\label{e2.6dl1}
\dfrac{\ln  I(t)}t=\frac{\ln I(0)}t+\dfrac1t\int_0^tg(\Phi(u),\xi_u)du
+\dfrac1t\int_0^t S(u)I(u)F_2(S(u), I(u),  \xi_u)dB(u).
\end{equation}
By the strong law of large numbers for martingales and ergodic Markov processes,
we derive from   \eqref{lambda} and \eqref{e2.5dl1} that
$$
\lim_{t\to\infty}\dfrac1t\int_0^tg(\Phi(u),\xi_s)du=\lambda\,\text{ and }\,
\lim_{t\to\infty}\dfrac1t\int_0^t S(u)I(u)F_2(S(u), I(u),  \xi_u)dB(u)=0\,\text{ a.s}.
$$
This and \eqref{e2.6dl1} imply \eqref{log}.
\end{proof}

To prove the permanence of the species when $\lambda>0$,
we use the techniques in \cite{BL1}.
We need some lemmas.

\begin{lm} Let $\partial \Delta_2:=\{\phi=(x,y,z)\in\Delta: y=0\}$. Then
there exists $T>0$ such that
for any $(\phi,i)\in \partial \Delta_2\times\M$,
\begin{equation}\label{e3.1}
\E_{\phi,i}\int_0^Tg(\Phi(t),\xi_t)dt\geq \dfrac{3\lambda}4T.
\end{equation}
\end{lm}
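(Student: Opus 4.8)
The plan is to analyze the dynamics on the boundary set $\partial\Delta_2=\{y=0\}$, where the disease is absent, and show that on this set the time-averaged growth rate of $g(\Phi(t),\xi_t)$ approaches the threshold $\lambda$ as $T$ grows large. The key observation is that when $I(0)=0$, the infected compartment stays identically zero, so $I(t)\equiv 0$ for all $t$, and the system \eqref{e1.1b} collapses to a lower-dimensional dynamics in $(S,R)$ driven only by the Markov chain $\xi_t$. With $y=0$ the $S$-equation loses its noise term ($SIF_2=0$) and reduces to the deterministic (but regime-switching) ODE $dS=\mu(\xi_t)(K-S)+\gamma_1(\xi_t)R$, with $R$ relaxing to $0$. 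I would first prove that on $\partial\Delta_2$ the solution $(S(t),R(t))$ converges to $(K,0)$; more importantly, I would establish a uniform-in-$\phi$ estimate showing that $S(t)\to K$ and $R(t)\to 0$ at a rate that is uniform over the compact set $\partial\Delta_2$.

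The heart of the argument is then an ergodic-type estimate. I would write
\begin{equation*}
\frac1T\E_{\phi,i}\int_0^T g(\Phi(t),\xi_t)\,dt
=\frac1T\E_{\phi,i}\int_0^T g(S(t),0,\xi_t)\,dt,
\end{equation*}
using $y\equiv0$ on the boundary, and then compare $g(S(t),0,\xi_t)$ to $g(K,0,\xi_t)$. Since $g$ is continuous and $S(t)\to K$ uniformly, the difference $|g(S(t),0,i)-g(K,0,i)|$ can be made small for $t$ large, uniformly in the starting point $\phi\in\partial\Delta_2$ and in $i$. The remaining piece $\frac1T\E_{\phi,i}\int_0^T g(K,0,\xi_t)\,dt$ is a pure Markov-chain average, which by the ergodic theorem for the irreducible finite-state chain $\xi_t$ converges to $\sum_{i} g(K,0,i)\pi_i=\lambda$ as $T\to\infty$, and the convergence of the expected occupation measures $\frac1T\E_{\phi,i}\int_0^T \mathbbm 1_{\{\xi_t=j\}}\,dt\to\pi_j$ is uniform in the initial regime $i$ because $\mathcal M$ is finite.

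Combining these two facts, I would conclude that
\begin{equation*}
\liminf_{T\to\infty}\inf_{(\phi,i)\in\partial\Delta_2\times\M}
\frac1T\E_{\phi,i}\int_0^T g(\Phi(t),\xi_t)\,dt\geq\lambda,
\end{equation*}
so that the left-hand side exceeds $\tfrac34\lambda$ for all sufficiently large $T$; fixing one such $T$ gives \eqref{e3.1}. The main obstacle I anticipate is securing the \emph{uniformity} of both convergences over the compact boundary $\partial\Delta_2\times\M$: the decay $S(t)\to K$ must be controlled uniformly in the initial data (which follows from the comparison/relaxation structure of the linear $(S,R)$ subsystem and compactness of $\partial\Delta_2$), and the chain's occupation-time averages must converge uniformly in the initial state (which is automatic for a finite irreducible chain but should be invoked carefully). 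Once these uniform estimates are in hand, the remaining algebra is routine, and the factor $\tfrac34$ is simply a convenient margin below $\lambda>0$ that leaves room for the error terms.
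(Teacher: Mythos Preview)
Your proposal is correct and follows essentially the same argument as the paper: when $y=0$ the system reduces to a regime-switching ODE in $(S,R)$ that converges to $(K,0)$ uniformly in the initial condition, and combining this with the uniform ergodicity of the finite-state chain $\xi_t$ yields the uniform limit $\frac1T\E_{\phi,i}\int_0^T g(\Phi(t),\xi_t)\,dt\to\lambda$, from which a suitable $T$ is chosen. The paper's proof is in fact terser than yours, simply asserting the uniform convergence of $(S(t),R(t))$ and invoking uniform ergodicity without the explicit decomposition you outline.
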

\begin{proof}
When $I(0)=0$, we have $I(t)=0$ for any $t>0$ and $(S(t), R(t))$ satisfies
\begin{equation*}
\begin{cases}
dS(t)=\big(\mu(\xi_t) (K-S(t))+\gamma_2(\xi_t)R(t)\big)dt\\
dR(t)=- (\mu(\xi_t)+\gamma_2(\xi_t)) R(t))dt.
\end{cases}
\end{equation*}
It is easy to see that $(S(t),R(t))$ converges to $(K,0)$ uniformly in the initial values.
This and the uniform ergodicity of $\xi_t$ imply that
$$\lim_{t\to\infty}\dfrac1t\E_{\phi,i}\int_0^tg(\Phi(u),\xi_u)du=\lambda\text{ uniformly in }(\phi,i)\in\partial\Delta_2\times\M.$$
Thus, we can easily find a constant $T$ satisfying \eqref{e3.1}.
\end{proof}
\begin{lm}\label{laplace}
Let $Y$ be a random variable, suppose $\E \exp(Y)+\E \exp(-Y)\leq K_1.$
Then the log-Laplace transform
$u(\theta)=\ln\E\exp(\theta Y)$
is twice differentiable on $\left[0,0.5\right]$ and
$\frac{du}{d\theta}(0)= \E Y,$
$0\leq \frac{d^2u}{d\theta^2}(\theta)\leq 2K_2\,, \theta\in\left[0,0.5\right]$
 for some $K_2>0$ depending only on $K_1$. Thus, it follows from Taylor's expansion that
 $$u(\theta)\leq \E Y\theta+K_2\theta^2, \, \theta\in\left[0,0.5\right].$$
\end{lm}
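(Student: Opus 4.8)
The plan is to transfer everything onto the moment generating function $M(\theta)=\E\exp(\theta Y)$ and read off the statements about $u=\ln M$ from the regularity of $M$. First I would record a uniform domination estimate: for each integer $k\ge 0$ the constant $C_k:=\sup_{t\ge0}t^k e^{-t/2}$ is finite, and a short case analysis on the sign of $Y$ (for $Y\ge0$, $|Y|^k e^{\theta Y}\le(|Y|^k e^{-Y/2})e^{Y}\le C_k e^{Y}$; for $Y<0$, use $e^{\theta Y}\le1$ to get $|Y|^k e^{\theta Y}\le C_k e^{-Y}$) shows that for all $\theta\in[0,\tfrac12]$,
$$|Y|^k e^{\theta Y}\le C_k\big(e^{Y}+e^{-Y}\big),\qquad\text{hence}\qquad \E\big[|Y|^k e^{\theta Y}\big]\le C_k K_1.$$
The dominating function is integrable and independent of $\theta$, so the standard differentiation-under-the-expectation theorem applies twice: $M$ is twice continuously differentiable on $[0,\tfrac12]$ with $M'(\theta)=\E[Y e^{\theta Y}]$ and $M''(\theta)=\E[Y^2 e^{\theta Y}]$. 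The case $k=1$ also gives $\E|Y|\le K_1$, so $|\E Y|\le K_1$ and $M'(0)=\E Y$ is finite.

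Since $M(\theta)=\E e^{\theta Y}>0$, the function $u=\ln M$ is twice differentiable with $u'=M'/M$ and $u''=(M''M-(M')^2)/M^2$. Evaluating at $\theta=0$, where $M(0)=1$, yields $u(0)=0$ and $u'(0)=\E Y$. For the lower bound $u''\ge0$ I would invoke Cauchy--Schwarz on $Y e^{\theta Y}=(Y e^{\theta Y/2})e^{\theta Y/2}$, which gives $(M'(\theta))^2\le \E[Y^2 e^{\theta Y}]\,\E[e^{\theta Y}]=M''(\theta)M(\theta)$; this is exactly convexity of the cumulant generating function.

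The main obstacle is the upper bound $u''\le 2K_2$, which requires a lower bound on $M(\theta)$ depending only on $K_1$. Here I would combine Jensen's inequality with the estimate $|\E Y|\le K_1$ already obtained: for $\theta\in[0,\tfrac12]$,
$$M(\theta)=\E e^{\theta Y}\ge e^{\theta\,\E Y}\ge e^{-K_1/2}.$$
Since $u''(\theta)\le M''(\theta)/M(\theta)$ and $M''(\theta)=\E[Y^2 e^{\theta Y}]\le C_2 K_1$ from the first step, this gives $u''(\theta)\le C_2 K_1\, e^{K_1/2}=:2K_2$, a constant depending only on $K_1$.

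Finally, with $u(0)=0$, $u'(0)=\E Y$, and $0\le u''\le 2K_2$ on $[0,\tfrac12]$, Taylor's theorem with Lagrange remainder yields, for each $\theta\in[0,\tfrac12]$ and some $\xi\in(0,\theta)$,
$$u(\theta)=\E Y\,\theta+\tfrac12 u''(\xi)\theta^2\le \E Y\,\theta+K_2\theta^2,$$
which is the desired conclusion. I expect the only genuinely delicate point to be the lower bound on $M(\theta)$ via Jensen, as the rest (domination, differentiation under the integral, convexity by Cauchy--Schwarz) is routine.
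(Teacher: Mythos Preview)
Your proof is correct and follows essentially the same route as the paper: both establish the domination $|y|^k e^{\theta y}\le C_k(e^y+e^{-y})$, differentiate $M$ under the expectation, obtain $u''\ge 0$ via Cauchy--Schwarz/H\"older, and bound $u''$ above by combining $M''\le C_2K_1$ with the Jensen lower bound $M(\theta)\ge e^{\theta\E Y}\ge e^{-|\E Y|}$. Your version is slightly more explicit about the constants and the Jensen step, but the argument is the same.
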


\begin{proof} The proof of this lemma can be found in \cite{DY1}. For convenience, we present a sketch of the proof below. 
	It is easy to show that there exists some $K_2>0$ such that
	$$ |y|^k\exp(\theta y)\leq K_2(\exp(y)+\exp(-y)), k=1,2.$$
	for $\theta\in \left[0,\frac{1}2\right]$, $y\in\R$.
	For any $y\in\R$, let $\xi(y)$ be a number lying between $y$ and $0$ such that	
	$\exp(\xi(y))=\dfrac{e^y-1}y$.
	Pick $\theta\in\left[0,\frac{1}2\right]$
	and let $h\in\R$ such that $0\leq \theta+h\leq  \frac{1}2$. Then
	$$\lim\limits_{h\to0}\dfrac{\exp((\theta+h) Y)-\exp(\theta Y)}h= Y\exp(\theta Y)\text{ a.s.,} \text{ and}$$
	$$\left|\dfrac{\exp((\theta+h) Y)-\exp(\theta Y)}h\right|=|Y|\exp(\theta Y+\xi(hY))
	\leq 2K_3[ \exp(Y)+\exp(-Y)].$$
	By the Lebesgue dominated convergence theorem,
	$$\dfrac{d \E \exp(\theta Y)}{d\theta}=\lim\limits_{h\to0}\E\dfrac{\exp((\theta+h) Y)-\exp(\theta Y)}h= \E Y\exp(\theta Y).$$
	Similarly,
	$$\dfrac{d^2 \E \exp(\theta Y)}{d\theta^2}=\E Y^2\exp(\theta Y).$$
	As a result, we obtain
	$$\dfrac{d\phi}{d\theta}=\dfrac{\E Y\exp(\theta Y)}{\E \exp(\theta Y)}$$
	which implies
	$$\dfrac{d\phi}{d\theta}(0)=\E Y \quad 
	\text{and}\quad 
	\dfrac{d^2\phi}{d\theta^2}=\dfrac{\E Y^2\exp(\theta Y)\E \exp(\theta Y)-[\E Y\exp(\theta Y)]^2}{[\E \exp(\theta Y)]^2}.$$
	By H{\"o}lder's inequality we have $\E Y^2\exp(\theta Y)\E \exp(\theta Y)\geq[\E Y\exp(\theta Y)]^2$
	and therefore $$\dfrac{d^2\phi}{d\theta^2}\geq0\quad \text{for all}\quad \theta\in \left[0, 0.5\right].$$
	Moreover,
	$$
	\begin{aligned}
	\dfrac{d^2\phi}{d\theta^2}\leq& \dfrac{\E Y^2\exp(\theta Y)}{\E \exp(\theta Y)}
	\leq  \dfrac{K_3(\E \exp(Y)+\E \exp(- Y))}{\exp(\theta \E Y)}\\
	\leq & \dfrac{K_3(\E \exp(Y)+\E \exp(- Y))}{\exp(-|\E Y|)}:=K_2,
	\end{aligned}
	$$
	which concludes the proof.
\end{proof}
 
 We proceed to prove Theorem \ref{main} in the case that $\lambda>0$.

\begin{proof}[Proof of Theorem \ref{main}. Case: $\lambda>0$]
Consider the Lyapunov function  $V_\theta(\phi,i)=y^\theta$, where $\theta$ is a real constant to be determined. We have
$$
\Lom_iV_\theta(\phi, i)=\theta y^\theta[F_1(x, y, i)x-(\mu(i)+\rho(i)+\gamma_2(i))+\frac{\theta-1}2x^2F_2^2(x, y, i)].
$$
 It implies that $L V_\theta\leq H_\theta V_\theta$, where $$H_\theta=\sup\{\theta[F_1(x, y, i)x-(\mu(i)+\rho(i)+\gamma_2(i))+\frac{\theta-1}2x^2F_2^2(x, y, i)]: (x, y, z, i)\in \Delta\times \mathcal{M}\}.$$
Thus, by using It\^o's formula and taking expectation both sides, we obtain
\begin{equation}\label{e-itheta}
\E_{\phi,i} I^\theta(t)\leq y^\theta \exp(H_\theta t)\text{ for any }t\geq0, (\phi,i)\in(\Delta\setminus\partial\Delta_2)\times\M.
\end{equation}
By the Feller property and \eqref{e3.1},
there exists $\delta_2>0$ such that if $\phi=(x, y, z)\in\Delta$ with $y<\delta_2$
we have
\begin{equation}\label{e3.2}
\E_{\phi,i}G(T)=-\E_{\phi,i}\int_0^Tg(\Phi(t),\xi_t)dt\leq
-\dfrac{\lambda}2T.
\end{equation}
Since $G(t)=\ln I(0)-\ln I(t)$, 
we
have from \eqref{e-itheta} that $E_{\phi,i}(e^{G(t)}+e^{G(t)})\leq e^{H_1t}+e^{H_{-1}t}<\infty$.
Applying Lemma \ref{laplace},
we deduce from \eqref{e3.2} that
$$
\ln\E_{\phi,i}e^{\theta G(T)}\leq -\dfrac{\lambda\theta}2T+\hat H\theta^2 \text{ for } \theta\in[0,0.5],
$$
where $\hat H$ is a constant depending on $T$, $H_{-1}$ and $H_1$.
For sufficiently small $\theta$, we have

$$\E_{\phi,i}\frac{y^{\theta}}{I^{\theta}(T)}=\E_{\phi,i} \frac{I^{\theta}(0)}{I^{\theta}(T)}=
\E_{\phi,i}e^{\theta G(T)}\leq \exp(-\dfrac{\lambda\theta}4T)\,\text{ for } \phi\in\Delta,y<\delta_3, i\in\M.
$$

Equivalently,
$$\E_{\phi,i}I^{-\theta}(T)\leq q y^{-\theta} \,\text{ for }\, q= \exp(-\dfrac{\lambda\theta}4T)\,\text{ for } \phi\in\Delta,y<\delta_3,i\in\M.$$
This and \eqref{e-itheta} imply that
$$\E_{\phi,i}I^{-\theta}(T)\leq q y^{-\theta} +C\,\text{ for }\, C= \delta_3^{-\theta}\exp(H_{-\theta} T)\,\text{ for } \phi\in\Delta,i\in\M.$$
By the Markov property, we deduce that
$$\E_{\phi,i}I^{-\theta}((k+1)T)\leq q \E_{\phi,i}I^{-\theta}(kT) +C\,\text{ for } \phi\in\Delta,i\in\M, k\in\N.$$
Using this recursively we obtain
\begin{equation}
\E_{\phi,i}I^{-\theta}(nT)
\leq q^n y^{-\theta} +\dfrac{C(1-q^n)}{1-q}\,\text{ for } \phi\in\Delta,i\in\M, n\in\N.
\end{equation}
This estimate together with \eqref{e-itheta} leads to
\begin{equation}\label{e1-thm3.1}
\E_{\phi,i}I^{-\theta}(t)
\leq \left(q^n y^{-\theta} +\dfrac{C(1-q^n)}{1-q}\right)\exp(H_{-\theta} T)\,\text{ for } t\in[nT, nT+T].
\end{equation}
Letting $n\to\infty$ we obtain
$\limsup_{t\to\infty} \E_{\phi,i}I^{-\theta}(t)=\dfrac{C}{1-q}\exp(H_{-\theta} T),$
which implies \eqref{per}. The proof is thus completed.
\end{proof}
\section{Discussion and Numerical Experiments}\label{sec:con}

To highlight the contributions of this work, we compare our results with some of the recent developments in the literature. In fact, \cite{SLJJ1} considered the model
\begin{equation}\label{EX8}
\begin{cases}
dS(t)=\big(\mu(\xi_t)-\beta(\xi_t)S(t)I(t)-\mu(\xi_t)S(t)+\gamma(\xi_t)R(t)\\
\qquad \qquad\qquad \qquad -\sigma(\xi_t)S(t)I(t)(S(t)+I(t))\big)dt-\sigma(\xi_t)S(t)I(t)dB(t),  \\
dI(t)=\big(\beta(\xi_t)S(t)I(t)-(\mu(\xi_t)+\lambda(\xi_t))I(t)\big)dt+ \sigma(\xi_t)S(t)I(t)dB(t), \\
dR(t)=\big(\lambda(\xi_t) I(t) - (\mu(\xi_t)+\gamma(\xi_t)) R(t)\big)dt.
\end{cases}
\end{equation}

In that paper, they showed that

\begin{thm}\label{rs-20}
	\begin{enumerate}
	\item
	If $\beta_j\geq\sigma_j^2$ and $\sum\pi_jC_j<0$
	then the disease-free equilibrium is globally asymptotically stable in probability,
	where $C_j=\beta_j-\mu_j-\lambda_j-\dfrac12\sigma_j^2$.
	\item If
	$
	\sum\pi_j\left(\dfrac{\beta_j^2-2\mu_j\sigma_j^2}{2\sigma_j^2}\right)<0
	$
	then  the disease-free equilibrium is globally asymptotically stable almost surely.
	\item If  $\sum\pi_jC_j>0$ then the disease persists.
	\end{enumerate}
\end{thm}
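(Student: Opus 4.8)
The plan is to read Theorem \ref{rs-20} off our Theorem \ref{main} after recognizing \eqref{EX8} as a special case of \eqref{e1.1b}. First I would make the identifications $K=1$, $F_1(x,y,i)\equiv\beta_i$, $F_2(x,y,i)\equiv\sigma_i$, together with $\gamma_1(i)=\gamma_i$, $\gamma_2(i)=\lambda_i$ and $\rho(i)=0$ (the $R$-equation forces $\gamma_2(i)=\lambda_i$, so $\rho$ vanishes here; since none of the arguments in Theorem \ref{main} uses $\rho>0$ strictly, this is harmless). One caveat must be addressed at the outset: the $S$-drift of \eqref{EX8} carries the extra term $-\sigma_i S(t)I(t)(S(t)+I(t))$ that has no counterpart in \eqref{e1.1b}. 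The key observation is that this term is $O(I)$ and vanishes identically on the disease-free boundary $\partial\Delta_2=\{y=0\}$; moreover, since it is nonpositive, summing the three equations still gives $\frac{d}{dt}(S+I+R)\leq\mu_{\xi_t}(K-(S+I+R))$, so $\Delta$ remains invariant and the term alters neither the boundary $(S,R)$-dynamics nor the threshold.

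With these identifications, \eqref{lambda} yields
$$g(K,0,i)=\beta_i-\Big(\mu_i+\lambda_i+\frac{1}{2}\sigma_i^2\Big)=C_i,\qquad \lambda=\sum_{i=1}^{m_0}g(K,0,i)\pi_i=\sum_j\pi_jC_j,$$
so that the sign of the threshold $\lambda$ of \eqref{lambda} coincides with the sign of $\sum_j\pi_jC_j$. Parts (1) and (3) then follow directly: when $\sum_j\pi_jC_j<0$ the extinction half of Theorem \ref{main} gives $\Phi(t)\to(1,0,0)$ and the rate \eqref{log}, and when $\sum_j\pi_jC_j>0$ the permanence half gives \eqref{per}. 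In fact this delivers a.s. global stability of the disease-free state, stronger than the convergence-in-probability conclusion of \cite{SLJJ1}, and, crucially, requires no side condition $\beta_j\geq\sigma_j^2$ — which is exactly the improvement to be advertised.

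For part (2) I would not use the sharp threshold but bound the drift pointwise. It\^o's formula applied to $\ln I$ gives, in regime $i$, the drift coefficient $\beta_iS-(\mu_i+\lambda_i)-\frac{1}{2}\sigma_i^2S^2$ plus the martingale part $\sigma_iS\,dB$. Since $\max_{S\geq0}\{\beta_iS-\frac{1}{2}\sigma_i^2S^2\}=\frac{\beta_i^2}{2\sigma_i^2}$ (attained at $S=\beta_i/\sigma_i^2$, an over-estimate on the constrained range $S\in[0,1]$), dropping $-\lambda_i\leq0$ bounds the drift by $\frac{\beta_i^2}{2\sigma_i^2}-\mu_i=\frac{\beta_i^2-2\mu_i\sigma_i^2}{2\sigma_i^2}$. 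Dividing $\ln I(t)$ by $t$, the strong law of large numbers for martingales annihilates the stochastic integral and the ergodic theorem for $\xi_t$ sends the time-average of this bound to $\sum_j\pi_j\frac{\beta_j^2-2\mu_j\sigma_j^2}{2\sigma_j^2}$; the hypothesis makes this negative, whence $\limsup_{t\to\infty}t^{-1}\ln I(t)<0$ a.s. and $I(t)\to0$ a.s.

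The main obstacle is the bookkeeping forced by the spurious term $-\sigma_iSI(S+I)$. For parts (2) and (3) it is inert, since the $\ln I$-computation and the Lyapunov function $V_\theta=y^\theta$ see only the $I$-equation, and the boundary estimate \eqref{e3.1} sees only the $\{y=0\}$ dynamics. For the $\lambda<0$ estimate $\Lom_iV\leq p[g(K,0,i)+\sigma]V$ with $V=(K-x)^2+y^p+z^2$, however, the extra term does enter through the $-2(K-x)$ factor; here I would verify that its contribution is $O(y)$ uniformly on $\mathcal{U}_{\delta_1}\times\M$ and can therefore be absorbed into the same small-$y$ and small-$p$ estimates already in place, so that the local bound near $\mathcal{U}_{\delta_1}$ survives and the reduction to Theorem \ref{main} goes through.
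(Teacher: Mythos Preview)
Theorem~\ref{rs-20} is not proved in the paper; it is quoted verbatim from \cite{SLJJ1} (``In that paper, they showed that\ldots''). The paper's only engagement with it is the comparison paragraph that follows: it computes $\lambda=\sum_j\pi_jC_j$ for \eqref{EX8}, observes that Theorem~\ref{main} then yields parts (1) and (3) without the side hypothesis $\beta_j\geq\sigma_j^2$, and shows that the hypothesis of (2) forces $\lambda<0$ via the elementary inequality $\beta_j\leq\beta_j^2/(2\sigma_j^2)+\sigma_j^2/2$, so that Theorem~\ref{main} again applies.

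Your treatment of (1) and (3) is exactly this comparison argument and matches the paper. For (2) you take a genuinely different route: instead of reducing to $\lambda<0$ you bound the $\ln I$-drift pointwise by $\max_{S\geq0}\{\beta_iS-\tfrac12\sigma_i^2S^2\}-\mu_i$ and invoke the ergodic theorem and the martingale strong law directly. Both arguments are correct; yours is self-contained and is presumably close to what \cite{SLJJ1} does, while the paper's reduction emphasizes that (2) is strictly stronger than $\lambda<0$, which is the point of the comparison. You also flag the extra drift term $-\sigma_iSI(S+I)$ in \eqref{EX8}, which does not fit the template \eqref{e1.1b}, and sketch how to absorb it; the paper simply ignores this discrepancy when it writes ``for this model, our $\lambda$ is determined by\ldots'', so on that point you are being more careful than the paper itself.
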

On the other hand,
for this model, our $\lambda$ is determined by
$\lambda=\sum_{j\in\M} \pi_j(\beta_j-\mu_j-\lambda_j-\dfrac12\sigma_j^2)=\sum\pi_jC_j$
 an application of our results reads
that if $\lambda=\sum\pi_jC_j<0$,  the disease-free equilibrium is globally asymptotically stable almost surely.
In case $\lambda=\sum\pi_jC_j>0$, the disease persists.
Thus, our findings provide sharper results for the extinction of the disease
because we do not need the additional condition that $\beta_j\geq\sigma_j^2$ as in (1) of Theorem \ref{rs-20}.
Moreover, since $\beta_j-\frac{\sigma_j^2}2\leq \frac{\beta^2_j}{2\sigma_j^2}$ (following Cauchy's inequality $\beta_j\leq \frac{\beta^2_j}{2\sigma_j^2}+\frac{\sigma_j^2}2)
$
we have 
$$C_j=\beta_j-\mu_j-\lambda_j-\dfrac12\sigma_j^2\leq \dfrac{\beta_j^2-2\mu_j\sigma_j^2}{2\sigma_j^2}-\lambda_j< \dfrac{\beta_j^2-2\mu_j\sigma_j^2}{2\sigma_j^2}$$ 
which shows that the condition in (2) of Theorem \ref{rs-20} is much more restrictive  than ours.

In \cite{ZJ1}, they considered the model
\begin{equation}\label{EX17}
\begin{cases}
dS(t)=\big(\Lambda-\mu S(t)-\frac{\beta S(t)I(t)}{1+\alpha I(t)}+\delta R(t)\big)dt-\frac{\sigma S(t)I(t)}{1+\alpha I(t)}dB(t),  \\
dI(t)=\big(\frac{\beta S(t)I(t)}{1+\alpha I(t)}-(\mu + \gamma+\varepsilon) I(t)\big)dt+\frac{\sigma S(t)I(t)}{1+\alpha I(t)}dB(t),\\
dR(t)=\big(\gamma I(t) - (\mu+\delta) R(t)\big)dt.
\end{cases}
\end{equation}
They proved that if
$$\tilde R_0:=\dfrac{\beta \Lambda}{\mu(\mu+\gamma+\eps)}-\dfrac{\sigma^2\Lambda^2}{\mu^2(\mu+\gamma+\eps)}>1,
$$
the system is persistent in time-average.
For \eqref{EX17}, our threshold 
$$\lambda:=\beta\frac{\Lambda}{\mu}-(\mu+\gamma+\eps)-\frac{\sigma^2\Lambda^2}{\mu^2}=(\bar R_0-1)(\mu+\gamma+\eps)$$

Thus,  our theorem shows the persistence in probability which is stronger then persistence in time-average as $\lambda>0$ or equivalently $\bar R_0>1$. 
 Regarding to the extinction, we provide a more  relaxing condition. More specifically,
our results read that if $\lambda<0$ (or equivalently $\tilde R_0<1$) then 
the disease goes extinct with probability one, while
the condition for extinction in 
\cite{ZJ1} is
either $$\text{(a) } \sigma^2>\dfrac{\beta^2}{2(\mu+\gamma+\eps)} \quad \text{or (b) } \tilde R_0<1 \text{ and } \sigma^2\leq\dfrac{\beta\mu}{\Lambda}.$$
 Clearly, under either (a) or (b), we have $\lambda<0$ (the inverse implication is not true),
which implies that our condition for extinction $\lambda<0$ is more relaxing.

Focusing on a stochastic SIRS model with regime-switching, we have determined a threshold value whose sign specifies
whether or not the disease goes to extinct or survive permanently.

Working with a general incidence rate and a taking into account both white noise and color noise,
the model includes almost all SIRS models appeared in the literature (e.g. \cite{HZ1, LOK1,  NT1, Cai, Guo}).
In this paper, a nearly full classification for the asymptotic behaviors of the model
has been given. Only the critial case when $\lambda=0$ is not studied yet.
We also provide the exact exponential convergence rate when $\lambda<0$
which is not obtained using existing methods.
In constrast, in most existing results,
besides a threshold, some additional conditions are needed
in order to obtain the extinction and/or the permanence of the disease.
As a result,
our findings can be seen as significant extensions of results in the aforementioned papers. Moreover, the method we have used suggests an effective approach in treating SIRS systems.

\end{document}